\newcommand{\RNum}[1]{\uppercase\expandafter{\romannumeral #1\relax}}
\numberwithin{equation}{section}
\titleformat{\section}[runin]{\bfseries}{\thesection.}{3pt}{}[.]
\begin{document}

\title[Anti-self-dual blowups]%
{Anti-self-dual blowups}

\author{Vsevolod Shevchishin}
\address{Faculty of Mathematics and Computer Science\\
University of Warmia and Mazury\\
ul.~Słoneczna 54, 10-710 Olsztyn, Poland
}
\email{vsevolod@matman.uwm.edu.pl, shevchishin@gmail.com}

\author{Gleb Smirnov}
\address{School of Mathematics, 
University of Geneva, Rue du Conseil-G{\'e}n{\'e}ral, 7, 1205, Geneva, Switzerland}

\email{gleb.smirnov@unige.ch}




\begin{abstract}
Let $X$ be a closed, oriented four-manifold containing an embedded sphere with self-intersection number $(-1)$. Suppose that $b_2^+(X) \leq 3$. We show that there exists a Riemannian metric on $X$ such that the cohomology class dual to this sphere is represented by an anti-self-dual harmonic form. Furthermore, such a metric can be constructed even when there are multiple disjoint embedded $(-1)$-spheres.
\end{abstract}

\maketitle

\setcounter{section}{0}
\section{Main result}\label{main}
Let $(X, g)$ be a closed, oriented Riemannian $4$-manifold. The bundle of 2-forms over $X$ decomposes as $\Lambda^+ \oplus \Lambda^-$, where $\Lambda^+$ and $\Lambda^-$ are the eigenspaces of the Hodge star operator $\star: \Lambda^2 \to \Lambda^2$. A section $\varphi$ of $\Lambda^2$ is called self-dual (SD) if $\star \varphi = \varphi$ and anti-self-dual (ASD) if $\star \varphi = -\varphi$. By the Hodge theorem, each cohomology class in $H^2(X; \mathbb{R})$ has a unique harmonic representative, giving a canonical identification:
\[
H^2(X; \rr) = \left\{ \varphi \in \Gamma(\Lambda^2)\ |\ \Delta\, \varphi = 0 \right\}.
\]
The Hodge star commutes with $\Delta$, leading to a direct-sum decomposition: 
\[
H^2(X;\rr) = \calh^{+}_g \oplus \calh^{-}_g,
\]
where $\calh^{\pm}_g$ are defined as follows:
\[
\calh^{\pm}_g = \left\{ \varphi \in \Gamma(\Lambda^{\pm})\ |\ \Delta\, \varphi = 0 \right\}.
\]
The cup product pairing is positive-definite on $\mathcal{H}^+_g$ and negative-definite on $\mathcal{H}^-_g$, with $\mathcal{H}^+_g$ and $\mathcal{H}^-_g$ orthogonal in $H^2(X; \mathbb{R})$. 
The dimensions 
$b^\pm_2 = \operatorname{dim} \mathcal{H}^\pm_g$ are topological invariants: $b^+_2$ is the dimension of the maximal positive-definite subspace, and $b^-_2$ is the dimension of the maximal negative-definite subspace. 
\smallskip%

Assume that \( X \) contains \( n \) disjoint embedded \((-1)\)-spheres \( E_1, \ldots, E_n \). Then \( X \) is smoothly a connected sum:
\[
X = M \# n\,{\overline{\bold{CP}}^{\,2}},
\]
where each \((-1)\)-sphere \( E_i \) is contained in a different copy of \({\overline{\bold{CP}}^{\,2}}\), representing a line \({\overline{\bold{CP}}^{\,1}} \subset {\overline{\bold{CP}}^{\,2}}\). This note proves:
\begin{theorem}\label{t:main}
If $b^+_2 \leq 3$, there exists a Riemannian metric $g$ on $X$ such that for every self-dual harmonic form $u$ on $X$ and each $i$,
\[
\int_{E_i} u = 0.
\]
In other words, if $e_i \in H^2(X; \mathbb{Z})$ is the class Poincar\'e dual to $E_i$, then each $e_i$ is represented by an anti-self-dual form with respect to $g$.
\end{theorem}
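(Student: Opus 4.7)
The condition of Theorem~\ref{t:main} is equivalent to $\calh^+_g \perp e_i$ for every $i$ (with respect to the intersection form); equivalently, to $\calh^+_g \subset V$, where $V := \langle e_1, \ldots, e_n\rangle^\perp$ inside $H^2(X;\rr)$. From the connected-sum structure one has the orthogonal decomposition $H^2(X;\rr) = H^2(M;\rr) \oplus \bigoplus_i \rr e_i$, whence $V = H^2(M;\rr)$, of signature $(b_2^+(X), b_2^-(X)-n)$. The plan is therefore to find a metric $g$ on $X$ for which the period map $\Pi: g \mapsto \calh^+_g$, valued in the positive Grassmannian $\mathrm{Gr}^+(b_2^+, H^2(X;\rr))$, lands in the subvariety $\mathrm{Gr}^+(b_2^+, V)$; equivalently, every SD harmonic class on $X$ is to be supported on the $M$-side.

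I would proceed in two steps: an approximate construction by neck-stretching, then an exact correction by an implicit function theorem. For the first step, pick any metric $g_M$ on $M$, and on each $\overline{\mathbb{CP}^2}$ factor take a Fubini--Study metric, orientation chosen so that its K\"ahler form is ASD and represents a positive multiple of $e_i$. Glue these via long necks of length $T$ to obtain a family $g_T$ of metrics on $X$. Standard Hodge theory on pinching families shows that, as $T \to \infty$, $\calh^+_{g_T}$ converges in $\mathrm{Gr}^+$ to $\calh^+_{g_M} \subset V$, and in particular the $e_i$-components of $\calh^+_{g_T}$ tend to zero. For the second step, define $F: g \mapsto (e_i^+(g))_i$, valued (after a local trivialization of $\calh^+$ near $g_T$) in $\rr^{n \cdot b_2^+}$, and apply the IFT near some $g_T$ with $T$ large: if $dF_{g_T}$ is surjective, then there is a nearby exact solution $g$ with $F(g) = 0$.

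The crux is the surjectivity of $dF$. The standard variation formula for the period map identifies the image of $d\Pi_g$ with a subspace of $\mathrm{Hom}(\calh^+_g, \calh^-_g)$ produced by symmetric $2$-tensor perturbations of $g$; the map $dF_g$ is then the composition of this image with the natural projection onto the $n\cdot b_2^+$-dimensional normal subspace $\mathrm{Hom}(\calh^+_g, \langle e_1, \ldots, e_n\rangle)$ of $\mathrm{Gr}^+(b_2^+, V)$ inside $\mathrm{Gr}^+(b_2^+, H^2(X;\rr))$. The hypothesis $b_2^+ \leq 3$ appears to enter at exactly this point, as a technical condition ensuring that the perturbation analysis is tractable: for low $b_2^+$ the positive Grassmannian has an especially simple geometric structure (hyperbolic space for $b_2^+ = 1$, naturally complex for $b_2^+ = 2$, related to the hyperk\"ahler period domain for $b_2^+ = 3$), and one can explicitly produce metric variations spanning the normal directions, while for higher $b_2^+$ one would need a significantly more elaborate input. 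Verifying this transversality, uniformly in $T$, is where I expect the main difficulty of the proof to lie.
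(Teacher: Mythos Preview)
Your broad strategy (neck-stretch to get an approximate solution, then correct via an implicit-function/degree argument) matches the paper's. But the proposal has a genuine gap at exactly the point you flag as the main difficulty, and your diagnosis of where $b_2^+\le 3$ enters is off.

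The constraint $b_2^+\le 3$ has nothing to do with the global geometry of the positive Grassmannian. It is a \emph{pointwise} constraint: at any point $p$ the bundle $\Lambda^+$ has rank $3$. The paper exploits this as follows. It does not start from an arbitrary metric on $M$; instead it first uses Taubes's near-symplectic construction to produce a metric on $M$ that is flat near $p$ and carries an SD harmonic form $\psi_1$ with $\psi_1(p)=0$. Then, with $\omega$ the K\"ahler form of a linear complex structure on the flat chart, it chooses that complex structure so that the remaining SD harmonic forms $\psi_2,\psi_3$ satisfy $\langle\psi_i,\omega\rangle_p=0$. Two such orthogonality conditions can always be met because $\omega$ ranges over the $2$-sphere in $\Lambda^+_p$; together with the Taubes zero this handles exactly three forms, and no more. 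This is the entire content of the hypothesis.

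With this setup in hand, the paper proves a precise blowup formula: for the SD harmonic form $u$ on $X_T$ obtained by gluing from $\psi$ on $M$, one has $\int_C u = A\,\langle\omega,\psi\rangle_p\,e^{-2T}+O(e^{-cT})$ with $c>2$. So the leading term is already zero by construction, and the correction is one order smaller. The IFT is then applied not to the full space of metrics on $X$ but to a carefully built $b_2^+$-parameter family of metrics on $M$, all agreeing with $g$ near $p$, for which the map $s\mapsto (\langle\omega,\psi_{is}\rangle_p)$ is shown (via a Green's-function computation, Lemma~\ref{brun}) to be a local diffeomorphism at $s=0$. The blowup formula transports this to $X_T$ and closes the argument.

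Your sketch lacks both ingredients: you have no mechanism forcing the leading-order period to vanish, and no concrete family of perturbations with verified transversality. Your proposed surjectivity check on the full period map of $X$ would require exactly the kind of analysis the paper carries out, and your guess about the role of $b_2^+\le 3$ would not lead you there.
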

\begin{remark}
A Riemannian metric $g$ is called \emph{almost-K{\"a}hler} if there is a symplectic form $\omega$ and an $\omega$-compatible almost-complex structure $J$ such that $g(\cdot, \cdot) = \omega(\cdot, J\cdot)$. In this case, $\omega$ is a $g$-self-dual harmonic form. If $\omega$ is a symplectic form on $X$, and $e \in H^2(X; \mathbb{Z})$ is the class dual to an embedded $(-1)$-sphere, Taubes shows in \cite{Taub-2} that $[\omega] \cup e \neq 0$. Thus, the metric provided by Theorem \ref{t:main} cannot be almost-K{\"a}hler.
\end{remark}
Let $\operatorname{Met}(X)$ be the space of all smooth Riemannian metrics on $X$, equipped with the $C^{\infty}$-topology. Define $\Omega$ as the set of all $b^+_2$-dimensional subspaces of $H^2(X; \mathbb{R})$ where the cup product is positive-definite; this is an open subset of the Grassmannian of $b^+_2$-dimensional subspaces of $H^2(X; \mathbb{R})$. There is a canonical map:
\[
P \colon \operatorname{Met}(X) \to \Omega,
\]
assigning to each $g \in \operatorname{Met}(X)$ the subspace $\mathcal{H}^+_g \subset H^2(X; \mathbb{R})$. 
This map is called the \emph{period map} of $X$, as introduced in \cite{LeBr-1}. The concept of a period map arises in K3 surface theory: if $X$ is the underlying smooth manifold of a complex K3 surface and we restrict to the space of Kähler metrics, then $P$ corresponds to the (Burns-Rapoport) period map for K3 surfaces (see \cite{Kob-Tod}, \cite{B-R}, \cite{Siu-2}, and references therein).  
\smallskip%

The period map $P$ is invariant under the action of the Torelli group of $X$, which consists of all diffeomorphisms of $X$ acting trivially on its homology. It can also be shown that $P(g)$ depends only on the conformal class of $g$. Currently, there is no comprehensive theory of Riemannian period maps, and fundamental questions, such as whether $P$ is surjective or whether its fibers are connected, remain open. However, it is known that $P$ is an open map (in the $C^{\infty}$-topology). 
\smallskip%

To rephrase Theorem \ref{t:main} in terms of the period map: let $e_1, \ldots, e_n \in H^2(X; \mathbb{Z})$ be the cohomology classes Poincar\'e dual to the spheres $E_1, \ldots, E_n$, respectively. If $b^{+}(X) \leq 3$, there exists a Riemannian metric $g$ on $X$ such that $P(g) \perp e_i$ for each $i$.
\smallskip%

One motivation for studying the map $P$ is that the set of cohomology classes of negative square represented by ASD forms is, in principle, a smooth invariant of the manifold. In some cases, this invariant could be stronger than Seiberg-Witten invariants.
\smallskip%

Let $L \to X$ be a line bundle over $X$ with a Hermitian inner product, and set $c_1(L) = e$. Consider a Riemannian metric on $X$. A connection $A$ on $L$ is called anti-self-dual if its curvature satisfies $F^{+}_A = 0$, meaning the cohomology class $e$ is represented by an anti-self-dual form. Anti-self-dual connections on 
$\UU(1)$-bundles (i.e., line bundles) are the simplest examples of instantons; they correspond to reducible solutions to the Seiberg-Witten equations and play a fundamental role in the theory. In his foundational works \cite{Taub-3, Taub-4}, Taubes studied the existence of instantons on $\SU(2)$-bundles, and this note initiates the study of the $\UU(1)$ case. 
\smallskip%

A significant difference between our work and \cite{Taub-3, Taub-4} is that Taubes's existence results are stable under deformations of the Riemannian metric, while $\UU(1)$-instantons generally disappear under arbitrarily small perturbations of the metric. Indeed, if $g$ is a metric on $X$ such that $L$ admits an anti-self-dual connection, then necessarily $P(g) \perp e$. Since $P$ is open, any small perturbation of $P(g)$ in $\Omega$ still lies in the range of $P$. Thus, if $b^{+}(X) > 0$, there exists a small perturbation $g'$ of $g$ such that $P(g')$ is not orthogonal to $e$. Consequently, $L$ does not admit an anti-self-dual connection with respect to $g'$. 

\section{Sketch proof of Theorem \ref{t:main}}\label{sketch}
We provide a brief sketch of the proof of Theorem \ref{t:main}. The formal proof is given in \S\,\ref{main_proof}. Let $X$ be given as:
\[
X = M \# {\overline{\bold{CP}}^{\,2}},
\]
where $b^{+}_2(M) \leq 3$. In \S\,\ref{main_proof}, we explain how to adapt the proof for the case of multiple $(-1)$-spheres. We assume that $b^{+}_2(M) = 3$, as the case of lower $b^{+}_2$ is simpler.  
\smallskip%

To begin, we equip \( M \) with a special Riemannian metric, the existence of which is proven in \S\,\ref{main_proof}. A crucial step in this proof relies on the condition that \( b_2^+ \) does not exceed 3 — the dimension of \( \Lambda^+ \). Let $p \in M$ be a point on $M$, and let $g$ be a metric on $M$ such that a neighborhood $U$ of $p$ in $M$ is isometric to the unit ball in the standard flat $\mathbb{R}^4$. Let $g_0$ denote that flat metric on $\mathbb{R}^4$. It will be convenient to regard $\mathbb{R}^4$ as $\mathbb{C}^2$ and consider $g_0$ as a K{\"a}hler flat metric. We identify $(U,g)$ with the unit ball in $(\mathbb{C}^2, g_0)$, and when referring to the restriction of $g$ to $U$, we use the notation $g_0$. Let $r \colon M \to \mathbb{R}^2$ be the distance function from $p \in M$. We write $\omega = -d\,d^{\mathbb{C}} r^2$ for the K{\"a}hler form on $U$ associated with $g_0$. Since $b^{+}_2 = 3$, $M$ admits three independent SD harmonic forms $\psi_1, \psi_2, \psi_3$. Let $\langle \psi_i, \omega \rangle_p$ denote the inner product of $\psi_i$ and $\omega$ evaluated at $p$. We assume that $g$ is chosen so that for each $i$, 
\[
\langle \omega, \psi_i \rangle_p = 0.
\]
For $0 < a_1 < a_2 < 1$, consider the spherical shell
\[
N_{a_1, a_2} = \left\{ x \in U \mid a_1 \leq r(x) \leq a_2 \right\}.
\]
Let $(S,g_{S})$ be isometric to the unit sphere in $(\mathbb{C}^2, g_0)$. Using the coordinate $r$, we can write the metric $g_0$ on $N_{a_1, a_2}$ as:
\[
dr \otimes dr + r^2\, g_{S}.
\]
Setting $r = e^{-t}$, we can also express $g_0$ as $e^{-2t}\,dt \otimes dt + e^{-2t}\, g_{S}$. Thus, $N_{a_1, a_2}$ is conformally isometric to $[0,T] \times S^3$, with $T = \ln{a_2} - \ln{a_1}$, endowed with the product metric 
$dt \otimes dt + g_{S}$.
\smallskip%

Choose $a > 0$ and let $N \subset U$ be defined as:
\[
N = \left\{ x \in U \mid a \leq r(x) \leq a \cdot e \right\},\quad \ln{e} = 1.
\]
Apply a conformal transformation to $g$ so that $g$ remains unchanged outside a small neighborhood of $N$ and such that $(N,g)$ becomes isometric to $[0,1] \times S$ with the product metric $dt \otimes dt + g_{S}$. We will still denote the transformed metric by $g$. Note that the forms $\psi_i$ are still harmonic SD forms with respect to the transformed metric.
\smallskip%

Let $X$ be the blowup of $M$ at $p$, and let $C \subset X$ be the corresponding exceptional $(-1)$-sphere. A standard technique from \cite{G-H} is used to build a metric $g'$ on $X$ that agrees with $g$ outside an arbitrarily small neighborhood of $p$. More details on $g'$ are provided in \S\,\ref{blowup}.
\smallskip%

Let $X_{T}$ be the Riemannian manifold obtained from $(X, g')$ by replacing $N$ with $[0,T] \times S^3$. A gluing procedure in \S\,\ref{neck} associates to every harmonic SD form $\psi$ on $M$ a harmonic SD form $u_{T}$ on $X_{T}$ that is $C^{k}$-close to $\psi$ on $M - U$ for sufficiently large $T$. To simplify notation, we drop the subscript $T$ from $u_{T}$ and use $u$ for all forms $u_{T}$ defined on 
different Riemannian manifolds $X_{T}$. In \S\,\ref{blowup}, we prove the following formula:
\[
\int_{C} u = A \cdot \langle \omega, \psi \rangle_{p}\, e^{-2T} + O(e^{-cT}),
\]
where $A > 0$ and $c > 2$ are constants independent of $T$.
\smallskip%

To proceed, we endow $M$ with a special 3-dimensional family of metrics $g_s$, $s \in D^3$, the existence of which is proven in \S\,\ref{main_proof}. For $s = 0$, $g_s$ agrees with $g$ on the entirety of $M$. All $g_s$ agree with $g$ on $U$. $(M, g_{s})$ carries SD harmonic forms $\psi_{1s}$, $\psi_{2s}$, $\psi_{3s}$ that depend smoothly on $s$. Associated with the family $g_s$, there is an evaluation map $\pi \colon D^3 \to \mathbb{R}^3$ defined as:
\begin{equation}\label{eval_1}
\pi(s) = \left( 
\langle \omega, \psi_{1s} \rangle_p, 
\langle \omega, \psi_{2s} \rangle_p,
\langle \omega, \psi_{3s} \rangle_p
\right).
\end{equation}
We choose $g_s$ such that $\pi$ maps the point $s = 0$ in $D^3$ to the origin of $\mathbb{R}^3$ and such that $\pi$ is a local diffeomorphism from a neighborhood of $0 \in D^3$ onto a neighborhood of the origin in $\mathbb{R}^3$.
\smallskip%

Extend $g_{s}$ to a family of metrics $g'_{s}$ on the blowup of $M$ at $p$, that is, to $X$. Since each $g_s$ agrees with $g$ on $U$, we perform this extension in the same way we extended $g$ to $g'$. Next, replace $N$ with $[0,T] \times S$ to obtain a family of metrics, parameterized by $s \in D^3$, on $X_{T}$. Applying the gluing procedure in \S\,\ref{neck} to $\psi_{is}$, we obtain SD harmonic forms $u_{is}$ on $X_{T}$; then we have:
\[
\left( \int_C u_{1s}, 
\int_C u_{2s}, 
\int_C u_{3s} \right) = 
A \cdot \pi(s) e^{-2T} + O(e^{-cT}).
\]
It follows that for large enough $T$, there exists an $s \in D^3$ such that the left-hand side of the above equality vanishes. This completes the proof.

\section{A neck-stretching argument}\label{neck}
This section presents material known to experts (see, e.g., \cite{Atiyah-Pat-Sing-1, Nic-2, Taub-6}) in a form suited to our application. Let $X_1$ and $X_2$ be oriented compact 4-manifolds with boundary $Y$. Choose a Riemannian metric $g_1$ on $X_1$ such that a small closed neighborhood $P_1$ of $\partial X_1$ in $X_1$ is isometric to $[0,1] \times Y$ with the product metric:
\begin{equation}\label{product-metric}
dt \otimes dt + g_Y,
\end{equation}
where $t \in [0,1]$ and $g_Y$ is a metric on $Y$. Further, let $g_1$ be such that for some $\delta > 0$, an open neighborhood of $P_1$ in $X_1$ is isometric to $(-\delta, 1] \times Y$ with the same metric \eqref{product-metric}, where $P_1 = [0,1] \times Y \subset (-\delta, 1] \times Y$. Similarly, choose $g_2$ and $P_2$ on $X_2$.
\smallskip%

For convenience, we assume $Y$ is the 3-sphere with the round metric of radius 1, but the results hold if $H^1(Y; \mathbb{R}) = 0$.
\smallskip%

Define the non-compact elongation $X_i^{\infty}$ of $X_i$ by attaching the semi-infinite cylinder $[0, \infty) \times Y$ with the metric \eqref{product-metric}, identifying (isometrically) the cuff $P_i$ with $[0, 1] \times Y$. Let $g_i^{\infty}$ denote the metric on $X_i^{\infty}$. 
\smallskip%

Next, glue $X_1$ and $X_2$ together: choose $T > 0$ and define the \textit{neck} $N_T = [0, T] \times Y$ with the product metric. Consider $N_T$ as part of a larger cylinder $N'_T = [-1, T + 1]$. Identify (isometrically) $P_1$ with $[-1, 0] \times Y$ and $P_2$ with $[T, T+1] \times Y$. The resulting manifold, denoted $X_T$, consists of three pieces: $X_1, N_T, X_2$, with metrics $g_T$ such that $(X_1, g_T) = (X_1, g_1)$, $(X_2, g_T) = (X_2, g_2)$, and $(N_T, g_T)$ is isometric to $[0,T] \times Y$. For different values of $T$, we still refer to $N_T$ simply as $N$.
\smallskip%

Define two subsets of $N$:
\[
Q_1 = [0,1] \times Y, \quad Q_2 = [T-1, T] \times Y.
\]
Let $\mathcal{D} = d + d^{*}$ be the Hodge operator and $\Delta = \mathcal{D} \circ \mathcal{D}$ the Hodge Laplacian. We denote by $\mathcal{H}^+(X_i^{\infty}) = \operatorname{ker} \Delta$ the space of $L^2$-class SD harmonic 2-forms on $X_i^{\infty}$. For a domain $A$, $\|\cdot\|_{k, A}$ denotes the $L^2_k$-Sobolev norm on $A$.
\smallskip%

Let $\rho_2$ be a smooth non-negative function on $X_T$ that is zero on $X_1 - P_1$, transitions smoothly from 0 to 1 in $P_1$, and equals 1 away from $X_1$. Similarly, define $\rho_1$ on $X_T$ to be zero on $X_2 - P_2$, transitions smoothly in $P_2$, and equals 1 away from $X_2$.
\smallskip%

For $\psi \in \mathcal{H}^+(X_1^{\infty})$ with $\mathcal{D}\psi = 0$, we introduce two sequences of SD 2-forms, $v^{(i)}$ and $u^{(i)}$:
\[
\text{
$v^{(i)} \in L^2(X_1^{\infty})$ if $i$ is odd}, \quad
\text{
$v^{(i)} \in L^2(X_2^{\infty})$ if $i$ is even}, \quad
u^{(i)} \in L^2(X_{T}), \quad 
i = 1,2,\ldots.
\]
These forms depend on $T$, though this dependence is not explicit in our notation. Define $v^{(i)}$ and $u^{(i)}$ inductively as follows:

\begin{enumerate}[label=(\alph*)]
\item $v^{(1)} = \psi$.
\smallskip%

\item If $i$ is odd, restrict $v^{(i)}$ to $X_1 \cup N \cup P_2$ and set $u^{(i)} = v^{(i)} \rho_1$. If $i$ is even, restrict $v^{(i)}$ to $X_2 \cup N \cup P_1$ and set $u^{(i)} = v^{(i)} \rho_2$.
\smallskip%

\item Assume $i$ is odd; the even case is handled similarly. We construct a canonical solution $v^{(i)}$ to the equation $\mathcal{D}\, v^{(i)} = -\mathcal{D}\, u^{(i - 1)}$ on $X_1^{\infty}$. The 2-form $v^{(i)}$ will belong to $L^2(X_1^{\infty})$ and satisfy the estimate:
\begin{equation}\label{vestim}
\| v^{(i)} \|_{k, X_1 \cup Q_1} \leq A_k \| \Delta\, u^{(i - 1)} \|_{k - 2, P_1}
\end{equation}
for some constants $A_k$ independent of $T$ and $i$. Assuming $u^{(i-1)}$, $v^{(i-1)}$ have been constructed, construct $v^{(i)}$ as follows:
\end{enumerate}
Fix two numbers $T_1$ and $T_2$ such that $T_2 > T_1 > 0$, and define a strictly positive function $\chi \colon [0, +\infty) \times Y \to \mathbb{R}$ with $\chi = 1$ on $[0, T_1] \times Y$ and $\chi = e^{-2t}$ on $[T_2, +\infty) \times Y$. Extend $\chi$ to all of $X_1^{\infty}$ by setting $\chi = 1$ on $X_1$. Endow $X_1^{\infty}$ with the metric $\chi\, g_1^{\infty}$, giving it an asymptotically Euclidean structure: the semi-infinite cylinder $[T_2, +\infty) \times Y$ in $(X_1^{\infty}, \chi\, g_1^{\infty})$ is isometric to the closed ball of radius $e^{-T_2}$ in flat $\mathbb{R}^4$ minus the origin. Next, compactify $(X_1^{\infty}, \chi\, g_1^{\infty})$ by adding a point at infinity, denoting the resulting compactification by $(\hat{X}_1, \hat{g}_1)$.
\smallskip%

$\mathcal{D}\, u^{(i - 1)}$ is supported within the union of $P_1$ and $P_2$. Restrict $\mathcal{D}\, u^{(i - 1)}$ to $X_1 \cup N$. Since $\mathcal{D}\, u^{i}$ vanishes on $N$, we extend it to the entirety of $\hat{X}_1$ by setting it to zero outside $X_1$.
\smallskip%

Let $\mathcal{H}(\hat{X}_1)$ be the space of harmonic 2-forms on $(\hat{X}_1, \hat{g}_1)$. We have: 
\[
\int_{\hat{X}_1} \langle \varphi, \Delta\, u^{(i-1)}\rangle = 
\int_{\hat{X}_1} \langle \mathcal{D}\,\varphi, \mathcal{D}\, u^{(i-1)}\rangle = 0, \quad
\text{for each $\varphi \in \mathcal{H}(\hat{X}_1)$,}
\]
where $\langle \cdot, \cdot \rangle$ denotes the pointwise inner product. This holds because $\Delta\,\varphi = 0$ implies $\mathcal{D}\,\varphi = 0$. 
\smallskip%

Since $\Delta\, u^{(i-1)} \perp \mathcal{H}(\hat{X}_1)$, there exists a unique SD 2-form $\hat{v}^{(i)}$ on $(\hat{X}_1, \hat{g}_1)$ such that $\hat{v}^{(i)} \perp \mathcal{H}(\hat{X}_1)$ and $\mathcal{D}\,\hat{v}^{(i)} = - \mathcal{D}\,u^{(i-1)}$. Using standard elliptic estimates, we obtain: 
\[
\| \hat{v}^{(i)} \|_{k, \hat{X}_1} \leq 
A_k \| \Delta\, u^{(i - 1)} \|_{k - 2, P_1}
\]
for some constants $A_k$ depending only on $\hat{g}_1$. Let $v^{(i)}$ be the restriction of $\hat{v}^{(i)}$ to $X_1^{\infty}$. Choosing $T_2 > 0$ larger if needed, we may arrange that $\chi = 1$ on $Q_1$; then $g_1^{\infty}$ and $\hat{g}_1$ agree on $X_1 \cup Q_1$. Thus:
\[
\| v^{(i)} \|_{k, X_1 \cup Q_1} \leq \| \hat{v}^{(i)} \|_{k, \hat{X}_1}.
\]
Here, the norm on the left is computed using $g_1^{\infty}$, which implies \eqref{vestim}.
\smallskip%

It is clear that $v^{(i)}$ is $L^2$ on $(X_1^{\infty}, \chi\, g_1^{\infty})$. Since the $L^2$-norm of a 2-form is conformally invariant, it follows that $v^{(i)}$ is also $L^2$ on $(X_1^{\infty}, g_1^{\infty})$. Moreover, $v^{(i)}$ satisfies the equation $\mathcal{D}\,v^{(i)} = -\mathcal{D}\,u^{(i-1)}$ on $(X_1^{\infty}, \chi\, g_1^{\infty})$. We claim this equation remains valid when $\chi\, g_1^{\infty}$ is replaced with $g_1^{\infty}$. To see this, note that $\mathcal{D}\,u^{(i-1)}$ is supported in $P_1$, and the kernel of $\mathcal{D}$ is \emph{locally} conformally invariant. Thus, we only need to verify the equation at points in $P_1$. Since $\chi$ is chosen such that $g_1^{\infty}$ and $\chi\, g_1^{\infty}$ agree on $P_1$, the claim follows.
\smallskip%

With $u^{(i)}$ and $v^{(i)}$ constructed, we formally obtain:
\[
\mathcal{D}\, \left( \sum_{i = 1}^{\infty} u^{(i)} \right) = 0.
\]
More precisely, for each $i$, the support of $\Delta\,u^{(i)}$ is confined to $P_1 \cup P_2$. Additionally, $\mathcal{D}\,u^{(i)} = -\mathcal{D}\,u^{(i-1)}$ on $P_2$ for even $i$, and $\mathcal{D}\,u^{(i)} = -\mathcal{D}\,u^{(i-1)}$ on $P_1$ for odd $i$.

\begin{lemma}\label{iteration}
Given $k > 0$, the series
\[
u = \sum_{i = 1}^{\infty} u^{(i)}
\]
converges on $(X_{T}, g_{T})$ in the $L^2_{k}$-norm for all sufficiently large $T$. Moreover, 
\[
\| u - u^{(1)} \|_{k, X_{T}} = O(e^{-2T}),\quad 
\| u - u^{(1)} - u^{(2)} \|_{k, X_{T}} = O(e^{-4T}).
\]
\end{lemma}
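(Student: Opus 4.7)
The strategy is to establish a uniform geometric decay estimate $\|u^{(i)}\|_{k, X_T}\le C(Ke^{-2T})^{i-1}$, with constants $C, K$ independent of both $i$ and $T$. Granting this, the series converges in $L^2_k(X_T)$ as soon as $Ke^{-2T}<1$, i.e., for $T$ sufficiently large; summing the geometric tail from $i = 2$ gives $\|u - u^{(1)}\|_{k, X_T} = O(e^{-2T})$, and summing from $i = 3$ gives $\|u - u^{(1)} - u^{(2)}\|_{k, X_T} = O(e^{-4T})$. The identity $\mathcal{D} u = 0$ then follows from the telescoping structure already noted in the text.

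The analytic backbone is the exponential decay of $L^2$, $\mathcal{D}$-closed two-forms on a round cylinder over $Y = S^3$. Separating variables, any such $\alpha$ on $[a,b]\times Y$ decomposes into eigenmodes of the tangential Dirac-type operator on $S^3$; since $H^1(S^3;\mathbb{R}) = 0$ there are no zero modes, and the smallest positive eigenvalue is $2$, giving
\[
\|\alpha\|_{k, [s,s+1]\times Y}\le C\,e^{-2(s-a)}\|\alpha\|_{L^2([a,b]\times Y)}.
\]
I apply this to each $v^{(i)}$ on the portion of the cylindrical neck where its source $\mathcal{D} u^{(i-1)}$ vanishes, producing a travel-decay factor $e^{-2T}$ over the length of the neck.

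The inductive step then chains three bounds. Take $i$ odd (the even case is symmetric), so $u^{(i-1)} = \rho_2 v^{(i-1)}$ with $v^{(i-1)}$ on $X_2^{\infty}$, and $\mathcal{D} u^{(i-1)}$ supported in $P_1$. First, under the identification of the neck with part of the cylinder attached to $X_2^{\infty}$, $P_1$ sits at cylindrical distance $\approx T$ from the source region of $\mathcal{D} u^{(i-2)}$, so the decay lemma yields $\|v^{(i-1)}\|_{k, P_1}\le C_1 e^{-2T}\|v^{(i-1)}\|_{L^2(X_2^{\infty})}$. Second, because $\rho_2$ is a fixed cutoff whose derivatives are supported in the unit-length collar $P_1$, $\|\Delta u^{(i-1)}\|_{k-2, P_1}\le C_2\|v^{(i-1)}\|_{k, P_1}$. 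Third, the elliptic estimate \eqref{vestim} (together with the compactification construction used to establish it, which equally controls $\|v^{(i)}\|_{L^2(X_1^{\infty})}$) gives $\|v^{(i)}\|_{k, X_1\cup Q_1}\le A_k\|\Delta u^{(i-1)}\|_{k-2, P_1}$. Composing the three bounds produces the geometric ratio $K = A_k C_1 C_2$. The base case $v^{(1)} = \psi$ is handled directly: $\psi$ is $L^2$-harmonic on $X_1^{\infty}$, hence decays like $e^{-2t}$ down the attached cylinder, so $\|\Delta u^{(1)}\|_{k-2, P_2} = O(e^{-2T})$.

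The main technical obstacle is verifying that the constants $C_1, C_2, A_k$ are genuinely independent of both $T$ and $i$. Uniformity of $C_2$ is clear because $\rho_1, \rho_2$ and their derivatives are fixed functions supported in fixed-length collars $P_1, P_2$. Uniformity of $C_1$ follows from the translation invariance of the cylindrical metric on $\mathbb{R}\times S^3$, so that the relevant spectrum on $S^3$ is a $T$-independent datum. Uniformity of $A_k$ was already noted in the text: it depends only on the fixed compactified metrics $\hat{g}_1, \hat{g}_2$, which do not vary with $T$. Once these uniformities are in hand, the geometric-series summation is routine and yields the stated bounds.
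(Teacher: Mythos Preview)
Your proposal is correct and follows essentially the same approach as the paper: establish the recursive bound $\|u^{(i)}\|_{k,X_T}\le C_k e^{-2T}\|u^{(i-1)}\|_{k,X_T}$ by combining exponential decay along the cylinder (coming from the smallest relevant eigenvalue $\lambda=2$ for $\mathcal{D}$-closed forms on $S^3$), the cutoff estimate in the collars, and the $T$-independent elliptic estimate \eqref{vestim}, then sum the geometric series. The only cosmetic difference is that the paper tracks $\|u^{(i)}\|_{k,X_T}$ directly via the labeled estimates (A), (B), (C), whereas you route the recursion through $\|v^{(i)}\|_{L^2(X_j^\infty)}$; both close the loop in the same way.
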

\begin{proof}
It suffices to find constants $C_k$ such that 
\[
\| u^{(i)} \|_{k, X_{T}} \leq C_k\,e^{-2T} \| u^{(i-1)} \|_{k, X_{T}}.
\]
To this end, we derive several auxiliary estimates. In the neck $N$, $\Delta$ takes the form (see, e.g., \cite{Atiyah-Pat-Sing-1}):
\[
\Delta = -\partial^2_{t} + \Delta_{3},
\]
where $\Delta_{3}$ is the Hodge Laplacian on $Y$, independent of $t$. Since $\Delta\, \psi = 0$, $\psi$ admits a Fourier expansion on $N$:
\begin{equation}\label{psi-exp}
\psi = \sum_{\lambda} 
(\alpha_{\lambda} + \star \alpha_{\lambda}) e^{-\lambda\,t},
\end{equation}
where $\lambda$ runs over the positive and negative square roots of the eigenvalues of $\Delta_{3}$. Each coefficient $\alpha_{\lambda}$ is an eigenform of $\Delta_{3}$ associated with $\lambda^2$, and $\star \alpha_{\lambda}$ is obtained by applying the Hodge star operator to $\alpha_{\lambda}$.
\smallskip%

The eigenforms and spectrum of the Laplacian $\Delta_3$ on $p$-forms on the round sphere $S^n$ are known (see \cite{Foll}). For our case, $n = 3$ and $p = 2$, the minimal eigenvalue is 3, followed by 4. However, the condition $\cald\,\psi = 0$ implies that 
$d \alpha_{\lambda} = 0$ and that 3 will never occur; therefore, the minimal eigenvalue is 4. More generally, the eigenvalues of $\Delta_3$ for \emph{closed} eigenforms are $(2+k)^2$, where $k = 0, 1, 2, \ldots$.  
\smallskip%

Since $\psi$ is $L^2$ on $X_1^{\infty}$, the summation in \eqref{psi-exp} includes only positive $\lambda$'s:
\[
\psi = \sum_{\lambda \geq 2} 
(\alpha_{\lambda} + \star \alpha_{\lambda}) e^{-\lambda t}.
\]
Let $Q_1 + s \subset N$ be the subset defined by $Q_1 + s = [s, s + 1] \times Y$. Consider the $L^2$-norm of $\psi$ on $Q_1 + s$:
\[
\| \psi \|_{0, Q_1 + s}^{2} = \sum_{\lambda \geq 2} \int_{s}^{s + 1} 2\, e^{-2 \lambda t} \int_{Y} 
\langle \alpha_{\lambda}, \alpha_{\lambda} \rangle.
\]
Since $\alpha_{\lambda}$ do not depend on $t$, we get:
\begin{equation}\label{psi-l2}
\| \psi \|_{0, Q_1 + s}^{2} \leq e^{-4\,s} \| \psi \|_{0, Q_1}^{2}.
\end{equation}
Fix a small $\delta > 0$. The region 
$Q_1 + s$ lies in the union of $Q_1 + s + \delta$ and $Q_1 + s - \delta$. 
Using the standard elliptic estimates, we get: 
\begin{equation}\label{psi-sobolev}
\| \psi \|_{k, Q_1 + s}^2 \leq B'_k \left( 
\| \psi \|_{0, Q_1 + s + \delta}^2 + \| \psi \|_{0, Q_1 + s - \delta}^2 
\right).
\end{equation}
where $B'_k$ is independent of $s$ since $\Delta$ is a translation-invariant operator. Note that \eqref{psi-sobolev} is valid for each $s \in [0, T]$, as we have arranged for $g_{T}$ to remain the product metric in a slightly larger $\delta$-neighborhood of $P_2 = Q_1 + T$.
\smallskip%

Combining \eqref{psi-l2} with \eqref{psi-sobolev}, we get:
\[
\| \psi \|_{k, Q_1 + s}^2 \leq e^{-4\,s} B_{k} \| \psi \|_{0, Q_1}^2 \leq e^{-4\,s} B_{k} \| \psi \|_{k, Q_1}^2
\]
We can now apply this estimate twice:
\[
\| \psi \|_{k, N}^2 = \| \psi \|_{k, Q_1}^{2} + 
\| \psi \|_{k, Q_1 + 1}^{2} + \ldots + 
\| \psi \|_{k, Q_1 + T - 1}^2 \leq D_k \| \psi \|_{k, Q_1}^2,
\]
and
\[
\| \psi \|_{k, P_2}^2 = \| \psi \|_{k, Q_1 + T}^2 \leq e^{-4\,T} B_{k} \| \psi \|_{k, Q_1}^2.
\]
The constants $B_k$ and $D_k$ do not depend on $\psi$ and are the same for 
all $v^{(i)}$'s. Thus, we get:
\begin{equation}\label{v-N}
\| v^{(i)} \|_{k, N}^2 \leq 
D_{k} \| v^{(i)} \|_{k, Q_1}^2\ \text{for $i$ odd,}\quad 
\| v^{(i)} \|_{k, N}^2 \leq 
D_{k} \|v^{(i)} \|_{k, Q_2}^2\ \text{for $i$ even,}
\end{equation}
and
\begin{equation}\label{u-P12}
\| v^{(i)} \|_{k, P_2}^2 \leq 
e^{-4\,T} B_{k} \| v^{(i)} \|_{k, Q_1}^2\ \text{for $i$ odd,}\quad 
\| v^{(i)} \|_{k, P_1}^2 \leq 
e^{-4\,T} B_{k} \| v^{(i)} \|_{k, Q_2}^2\ \text{for $i$ even.}\quad
\end{equation}
Since $u^{(i)}$ and $v^{(i)}$ agree on $N$, we get from \eqref{v-N} that:
\begin{equation}\tag{A}\label{A}
\| u^{(i)} \|_{k, N}^2 \leq 
D_{k} \| u^{(i)} \|_{k, Q_1}^2\ \text{for $i$ odd,}\quad 
\| u^{(i)} \|_{k, N}^2 \leq 
D_{k} \|u^{(i)} \|_{k, Q_2}^2\ \text{for $i$ even.}
\end{equation}
Since 
$\rho_1$ and $\rho_2$ are smooth, we have: 
\[
\| u^{(i)} \|_{k, P_2}^2 \leq E_k \| v^{(i)} \|_{k, P_2}^2\ 
\text{for $i$ odd,}\quad 
\| u^{(i)} \|_{k, P_1}^2 \leq E_k \| v^{(i)} \|_{k, P_1}^2\ 
\text{for $i$ even.}\quad
\]
Here, the constants $E_k$ do not depend on $i$ and are determined by $\rho_1$ and $\rho_2$. 
Combining this with \eqref{u-P12} we find:
\[
\| u^{(i)} \|_{k, P_2}^2 \leq e^{-4\,T} E_{k} B_{k} \| v^{(i)} \|_{k, Q_1}^2\  
\text{for $i$ odd,}\quad 
\| u^{(i)} \|_{k, P_1}^2 \leq e^{-4\,T} E_{k} B_{k} \| v^{(i)} \|_{k, Q_2}^2\ 
\text{for $i$ even.}\quad
\]
$u^{(i)}$ and $v^{(i)}$ agree on $Q_1$ for each odd $i$, and agree 
on $Q_2$ for each even $i$. Setting $F_k = E_{k} B_{k}$, we get:
\begin{equation}\tag{B}\label{B}
\| u^{(i)} \|_{k, P_2}^2 \leq e^{-4\,T} F_{k} \| u^{(i)} \|_{k, Q_1}^2\  
\text{for $i$ odd,}\quad 
\| u^{(i)} \|_{k, P_1}^2 \leq e^{-4\,T} F_{k} \| u^{(i)} \|_{k, Q_2}^2\ 
\text{for $i$ even.}\quad
\end{equation}
Using continuity of $\Delta$, we write \eqref{vestim} as:
\begin{equation}\tag{C}\label{C}
\| u^{(i)} \|_{k, X_1 \cup Q_1} \leq 
A_k \| \Delta\, u^{(i - 1)} \|_{k - 2, P_1} 
\leq G_{k} \| u^{(i - 1)} \|_{k, P_1}.
\end{equation}
Let $i$ be odd. We calculate:
\begin{multline*}
\|u^{(i)}\|^2_{k, X_{T}} = 
\|u^{(i)}\|^2_{k, X_1} + 
\|u^{(i)}\|^2_{k, N} + 
\|u^{(i)}\|^2_{k, P_2} 
\stackrel{\eqref{A}}{\leq} 
\|u^{(i)}\|^2_{k, X_1} + 
D_k \|u^{(i)}\|^2_{k, Q_1} + 
\|u^{(i)}\|^2_{k, P_2} 
\stackrel{\eqref{B}}{\leq}\\
\leq 
\|u^{(i)}\|^2_{k, X_1} + 
D_k \|u^{(i)}\|^2_{k, Q_1} +
e^{-4\,T} F_k \|u^{(i)}\|^2_{k, Q_1} \leq 
H_{k} \|u^{(i)}\|^2_{k, X_1 \cup Q_1} 
\stackrel{\eqref{C}}{\leq}\\ 
\leq H_{k} G_k^2 \|u^{(i-1)}\|^2_{k, P_1} 
\stackrel{\eqref{B}}{\leq} 
e^{-4\,T} F_k H_{k} G_k^2 \|u^{(i-1)}\|^2_{k, Q_2} 
\leq e^{-4\,T} C_k^2 \|u^{(i-1)}\|^2_{k, X_T}.
\end{multline*}
A similar argument applies when $i$ is even. This completes the proof. \qed
\end{proof}
\smallskip%

Consider again the Fourier expansion of $\psi$: 
\begin{equation}\label{psi-four}
\psi = \sum_{\lambda\, \geq\, 2} \psi_{\lambda},\quad 
\psi_{\lambda} = a_{\lambda} e^{-\lambda\,t},
\end{equation}
and write $v^{(2)}$ as follows: 
\[
v^{(2)} = \sum_{\lambda\,\geq\,2} v^{(2)}_{\lambda},
\]
where $v^{(2)}_{\lambda} \in L^2(X_2^{\infty})$ 
are solutions to the following equations:
\[
\cald\, v^{(2)}_{\lambda} = - \cald\, \left( \rho_1 \psi_{\lambda} \right).
\]
The solutions $v^{(2)}_{\lambda}$ are obtained 
in the same way as $v^{(2)}$. 
Put $u^{(2)}_{\lambda} = v^{(2)}_{\lambda}\rho_2$. 
Our goal is to show 
that $u^{(2)}$ is well approximated by $u^{(2)}_{2}$.
\begin{lemma}\label{u2}
$\| \sum_{\lambda\,>\, 2} u^{(2)}_{\lambda} \|_{k, X_2} = O(e^{-c\,T})$ for 
some $c > 2$.
\end{lemma}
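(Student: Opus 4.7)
The plan is to rerun the argument of Lemma \ref{iteration}, but applied to the Fourier tail
\[
\tilde\psi \,:=\, \sum_{\lambda > 2} \psi_\lambda
\]
in place of the full form $\psi$. The only place the exponent $-4$ (hence the final exponent $-2$) enters Lemma \ref{iteration} is through the bound $\lambda \ge 2$ in \eqref{psi-l2}. Restricting to modes with $\lambda \ge 3$ — which is legitimate since the paper's spectral computation shows the admissible $\lambda$'s form the set $\{2, 3, 4, \ldots\}$ — should replace this with the exponent $-6$ and ultimately yield the bound $O(e^{-3T})$. Hence any $c \in (2, 3]$ will work.

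Step 1: reproduce \eqref{psi-l2} and \eqref{psi-sobolev} for $\tilde\psi$. Pointwise $L^2$-orthogonality of the eigenmodes $\alpha_\lambda$ on each slice $Q_1 + s$ is unaffected by the mode restriction, so the same integral computation gives
\[
\|\tilde\psi\|_{0, Q_1 + s}^{2} \,\le\, e^{-6s}\, \|\tilde\psi\|_{0, Q_1}^{2}.
\]
The elliptic estimate \eqref{psi-sobolev} applies to any $\Delta$-harmonic form on the neck, and $\tilde\psi$ is $\Delta$-harmonic there since each $\psi_\lambda$ is an eigenmode of the translation-invariant $-\partial_t^{2} + \Delta_3$. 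Combining, specialized to $s = T$, yields
\[
\|\tilde\psi\|_{k, P_2}^{2} \,\le\, e^{-6T}\, B_k\, \|\tilde\psi\|_{k, Q_1}^{2} \,=\, O\bigl(e^{-6T}\bigr).
\]

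Step 2: by linearity and uniqueness of the canonical $L^2$-solution orthogonal to $\calh(\hat X_2)$, the form $\tilde v^{(2)} := \sum_{\lambda > 2} v^{(2)}_\lambda$ solves $\cald\, \tilde v^{(2)} = -\cald(\rho_1\tilde\psi)$ on $X_2^\infty$. Since $\Delta\tilde\psi = 0$ on $N$, we have on $P_2$ the identity $\Delta(\rho_1\tilde\psi) = [\Delta, \rho_1]\tilde\psi$, a first-order differential expression in $\tilde\psi$ whose coefficients are smooth and supported in $P_2$. The even-index analogue of \eqref{vestim} then gives
\[
\|\tilde v^{(2)}\|_{k, X_2 \cup Q_2} \,\le\, A_k\, \|\Delta(\rho_1\tilde\psi)\|_{k-2, P_2} \,\le\, C\, \|\tilde\psi\|_{k, P_2} \,=\, O\bigl(e^{-3T}\bigr).
\]
Since $\rho_2 \equiv 1$ on $X_2$, we have $\sum_{\lambda > 2} u^{(2)}_\lambda = \tilde v^{(2)}$ on $X_2$, so the conclusion of the lemma follows with $c = 3$.

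The one point that warrants care is the verification that $\tilde v^{(2)}$ actually arises as a canonical solution, i.e., that $\Delta(\rho_1\tilde\psi)$ is orthogonal to $\calh(\hat X_2)$. This follows by the same integration by parts used in the construction of $v^{(2)}$: for every $\varphi \in \calh(\hat X_2)$,
\[
\int_{\hat X_2} \langle \varphi,\, \Delta(\rho_1\tilde\psi)\rangle \,=\, \int_{\hat X_2} \langle \cald\varphi,\, \cald(\rho_1\tilde\psi)\rangle \,=\, 0,
\]
since $\cald(\rho_1\tilde\psi)$ is compactly supported inside $P_2 \subset \hat X_2$ and $\cald\varphi = 0$. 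Everything else is a word-for-word reprise of the proof of Lemma \ref{iteration} with the numerical improvement $\lambda \ge 2 \leadsto \lambda \ge 3$, which is the only substantive change needed.
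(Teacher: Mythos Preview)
Your proof is correct and follows essentially the same route as the paper's: both arguments isolate the Fourier tail $\sum_{\lambda>2}\psi_\lambda$, use the improved decay $\|\sum_{\lambda\ge 3}\psi_\lambda\|_{k,P_2}=O(e^{-3T})$ on the neck, and then feed this into the even-index version of the estimate \eqref{vestim}/(C) to bound $\sum_{\lambda>2}u^{(2)}_\lambda$ on $X_2$. The paper is slightly terser (it cites (C) directly rather than unwinding to $\Delta(\rho_1\tilde\psi)$), while you add the explicit check that $\Delta(\rho_1\tilde\psi)\perp\calh(\hat X_2)$, but the substance is identical and the resulting constant $c=3$ matches.
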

\begin{proof}
Choose $c > 0$ such that $c^2$ is the next largest eigenvalue 
of $\Delta_3$ after 4 (exact value of $c$ is 3, but it's not needed for our purpose). Since 
\[
\cald\, \left( \sum_{\lambda\,\geq\, c} u^{(2)}_{\lambda} \right) = 
- \cald\, \left( \rho_1 \sum_{\lambda\,\geq\, c} \psi_{\lambda} \right),
\]
we get:
\[
\| \sum_{\lambda\,\geq\, c} u^{(2)}_{\lambda} \|_{k, X_2} 
\stackrel{\eqref{C}}{\leq}
G_k \| \rho_1 \sum_{\lambda\,\geq\, c} \psi_{\lambda} \|_{k, P_2}
\leq 
G_k \sqrt{E_k} \| \sum_{\lambda\,\geq\, c} \psi_{\lambda} \|_{k, P_2}.
\]
Using an estimate similar to \eqref{psi-l2}, we continue:
\[
\| \sum_{\lambda\,\geq\, c} \psi_{\lambda} \|_{k, P_2} 
\leq 
e^{-c\,T} B_k \| \sum_{\lambda\,\geq\, c} \psi_{\lambda} \|_{0, Q_1} = O(e^{-c\,T}),
\]
and the lemma follows. \qed
\end{proof}

\section{Blowup formula}\label{blowup}
Let $(M, g)$ be a Riemannian 4-manifold, and let \(p \in M\) be a point. Choose the metric \(g\) so that a neighborhood \(U\) of \(p\) in \(M\) is isometric to the unit ball in \(\mathbb{R}^4\) with the standard flat metric \(g_0\). Choose a linear complex structure on \(\mathbb{R}^4\) such that \(g_0\) is K{\"a}hler. Thus, \(U\) can be considered a complex domain, where the metric \(g\) is K{\"a}hler. When referring to the restriction of \(g\) to \(U\), we often use \(g_0\). Denote by \(\omega\) the K{\"a}hler 2-form on \(U\). Let \(r \colon M \to \mathbb{R}\) be the distance function from \(p\). Then on \(U\), we have \(\omega = -d\,d^{\mathbb{C}} r^2\). In terms of \(r\), \(U = \{ x \in X \mid r(x) < 1 \}\). The group $\UU(2)$, which preserves the Hermitian metric \(g_0\) on \(\mathbb{C}^2\), acts by isometries on \(U\). Both the distance function \(r\) and the 2-form \(\omega\) are invariant under this action.
\smallskip%

Let \(g_{S}\) be the metric on the unit sphere \(S \subset (\mathbb{C}^2, g_0)\). The flat metric \(g_0\) can then be expressed as:
\[
g_0 = dr \otimes dr + r^2\, g_{S}.
\]
Setting \(r = e^{-t}\), this becomes:
\[
g_0 = e^{-2t}\,dt \otimes dt + e^{-2t}\, g_{S}.
\]
Thus, the punctured ball \((U, g)\) is conformally isometric to the semi-infinite cylinder \((0, +\infty) \times S\) with the product metric \(dt \otimes dt + g_{S}\).
\smallskip%

Choose \(a\) such that \(0 < a < a \cdot e < 1\), and define \(N \subset U\) as:
\[
N = \{ x \in U \mid a \leq r(x) \leq a \cdot e \}.
\]
In terms of the semi-infinite cylinder, \(U\) corresponds to the region \([- \ln{a} - 1, -\ln{a}] \times S\).
\smallskip%

To apply Lemma \ref{iteration}, we introduce the following notations:
\[
P_1 = \{ x \in U \mid a \cdot e \leq r(x) \leq a \cdot e^2 \},\quad 
P_2 = \{ x \in U \mid a \cdot e^{-1} \leq r(x) \leq a \}.
\]
These notations are consistent with those used in \S\,\ref{neck}.
\smallskip%

Let \(\psi\) be a harmonic SD form on \((M, g)\). In the neighborhood \(U \subset M\), the 2-form \(\psi\) admits a Fourier expansion:
\[
\psi = \sum_{\lambda \geq 2} \psi_{\lambda},\quad 
\psi_{\lambda} = a_{\lambda} r^{\lambda}.
\]
Since \(\psi\) is a harmonic SD form with respect to \(g_0\), it is also harmonic SD with respect to the product metric \(e^{-2t} g_0\). Substituting \(r = e^{-t}\), we obtain the series:
\[
\psi = \sum_{\lambda \geq 2} 
a_{\lambda} e^{-\lambda t},
\]
which is of the form \eqref{psi-four}. The summation includes only positive \(\lambda\) since \(\psi\) is defined over all of \(M\). Each \(\psi_{\lambda} = a_{\lambda} e^{-\lambda t}\) is a harmonic SD 2-form on \(U\).
\smallskip%

If \(\lambda > 2\), the \(g_0\)-length of \(\psi_{\lambda}\) tends to 0 as \(r \to 0\), so:
\[
\langle \psi, \omega \rangle_{p} = \langle \psi_2, \omega \rangle_{p},
\]
where the inner product \(\langle \psi, \omega \rangle_{p}\) is taken with respect to \(g_0\).
\smallskip%

Let \(\mathbb{C}^2\) (and \(U \subset \mathbb{C}^2\)) have complex coordinates \((z, w)\). Set \(\Omega = dz \wedge dw\). The eigenspace of \(\Delta_3\) associated with the eigenvalue 4 is 3-dimensional, and we can write \(\psi_2\) as:
\[
\psi_2 = a_1\,\omega_1 + a_2\,\omega_2 + a_3\,\omega_3,\quad a_i \in \mathbb{R},\qquad 
\text{where}\quad 
\omega_1 = \frac{\omega}{4},\ 
\omega_2 = \operatorname{Re} \Omega,\ 
\omega_3 = \operatorname{Im}\Omega.
\]
\(\langle \omega_i, \omega_j \rangle\) are constant on \(U\) and satisfy \(\langle \omega_i, \omega_j \rangle = 2 \delta_{ij}\). Hence, 
\(\langle \psi_2, \omega \rangle = 8 a_1\). 
\smallskip%

Choose a strictly positive function \(\chi \colon M \to \mathbb{R}\) such that \(\chi = 1\) outside a small neighborhood of \(P_1 \cup N \cup P_2\) and \(\chi = e^{-2t}\) on \(P_1 \cup N \cup P_2\). Then \((N, \chi g)\) is isometric to \([0,1] \times S\) with the product metric \(dt \otimes dt + g_{S}\). Since \(\chi g\) and \(g\) are conformal, the form \(\psi\) remains harmonic SD, as do the forms \(\psi_{\lambda}\) and \(\omega_i\). 
\smallskip%

Let \(X\) be the blowup of \(M\) at \(p\). Denote by \(C \subset X\) the \((-1)\)-sphere arising from the blowup, and let \(\sigma \colon X \to M\) be the blow-down map of \(C\). The function \(r \circ \sigma \colon \sigma^{-1}(M) \to \mathbb{R}\) is smooth and plurisubharmonic on \(\sigma^{-1}(U)\), but not strictly plurisubharmonic. Choose a small \(\delta > 0\) and define \(U_{\delta} \subset M\) as \(U_{\delta} = \{ x \in U \mid r(x) < \delta \}\). A classical result (see, e.g., Ch.\,1 in \cite{G-H}) states that \(X\) admits a function \(h\) that is strictly plurisubharmonic 
on \(\sigma^{-1}(U)\) and satisfies \(h = r^2\) on \(X - \sigma^{-1}(U_{\delta})\). Let \(g'\) be the Riemannian metric on \(\sigma^{-1}(U)\) induced by the K{\"a}hler 2-form \(\omega' = -d\,d^{\mathbb{C}} h\). The metrics \(g'\) and \(g\) agree outside \(\sigma^{-1}(U_{\delta})\).
\smallskip%

Now multiply \(g'\) by the function \(\chi\). Let \(X_{T}\) be the manifold obtained from \((X, \chi g')\) by replacing the region \(N\) with \([0,T] \times S\). Define \(X_1 \subset M\) as:
\[
X_1 = \{ x \in M \mid r(x) \geq a \cdot e \}.
\]
Since \(X_1\) does not contain \(N\), we consider \(X_1\) as a subset of \(X_T\). Similarly, define \(X_2 \subset X_{T}\) as:
\[
X_2 = X_{T} - N - X_1.
\]
\begin{lemma}\label{blowup-lemma}
Let \(M\) and \(X_{T}\) be as above. Suppose there exists a harmonic SD form \(\psi\) on \(M\). Let \(\langle \psi, \omega \rangle\) be the function on \(U\) given by the pointwise inner product with respect to \(g\), and let \(\langle \psi, \omega \rangle_{p}\) denote its value at \(p\). Then, for all sufficiently large \(T\), there exists a harmonic SD form \(u\) on \(X_{T}\) such that \(\| u - \psi \|_{C^{0},\, X_1} \to 0\) as \(T \to \infty\), and
\begin{equation}\label{blowup-area}
\int_{C} u = A \cdot \langle \omega, \psi \rangle_{p}\, e^{-2T} + O(e^{-cT})
\end{equation}
for some constants \(A > 0\) and \(c > 2\), independent of \(T\). 
\end{lemma}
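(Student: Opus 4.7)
The strategy is to construct $u$ via the iterative gluing of Section \ref{neck} and then extract the leading term of $\int_C u$ by combining Lemma \ref{iteration}, Lemma \ref{u2}, and a Hodge-type analysis on the conformally compactified blowup side.

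\emph{Construction and reduction.} Apply Lemma \ref{iteration} to $\psi$, regarded as an element of $\calh^+(X_1^\infty)$ via its Fourier expansion on the cylindrical end (with $X_1 = \{r \geq a\cdot e\} \subset M$ and $X_2$ the blow-up region $\{r \circ \sigma \leq a\} \cup C$ equipped with the conformally rescaled metric $\chi g'$). This produces a harmonic SD form $u = \sum_{i \geq 1} u^{(i)}$ on $X_T$ with
\[
\|u - u^{(1)}\|_{k, X_T} = O(e^{-2T}), \qquad \|u - u^{(1)} - u^{(2)}\|_{k, X_T} = O(e^{-4T}).
\]
Since $u^{(1)} = \rho_1 \psi$ restricts to $\psi$ on $X_1$, Sobolev embedding gives the $C^0$-convergence $\|u - \psi\|_{C^0, X_1} = O(e^{-2T}) \to 0$. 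Because $\rho_1$ vanishes identically on $C \subset X_2 - P_2$, one has $\int_C u^{(1)} = 0$. The Sobolev trace theorem applied to the second estimate together with Lemma \ref{u2} then reduces the problem to computing $\int_C u^{(2)}_2 + O(e^{-cT})$ with $c = 3 > 2$; only the $\lambda = 2$ mode contributes to leading order.

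\emph{Factoring and Hodge-type analysis.} The source $-\mathcal{D}(\rho_1 \psi_2) = -d\rho_1 \wedge \psi_2$ is supported in $P_2$ and has pointwise magnitude $\sim e^{-2T}$ there, since $\psi_2 = a_\lambda e^{-\lambda t}|_{\lambda = 2}$. By linearity of the $T$-independent elliptic problem on the conformally compactified space $\hat X_2 \cong \overline{\mathbb{CP}^2}$, we may write $u^{(2)}_2 = e^{-2T}(a_1 \hat w_1 + a_2 \hat w_2 + a_3 \hat w_3)$, where $\hat w_i$ is the canonical solution associated to the $\lambda = 2$ eigenform $\omega_i$. The complex structure inherited from the blowup makes $\hat X_2$ Kähler, and SD forms split pointwise as $\Lambda^+ = \mathbb{R}\omega' \oplus (\Lambda^{2,0}\oplus\Lambda^{0,2})_\mathbb{R}$. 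The eigenforms $\omega_2 = \operatorname{Re}\Omega$ and $\omega_3 = \operatorname{Im}\Omega$ lie in the $(2,0)+(0,2)$ summand; since the elliptic construction of Section \ref{neck} respects this decomposition on a Kähler surface, $\hat w_2$ and $\hat w_3$ remain of type $(2,0)+(0,2)$. Any real form of this type pulls back to zero on the complex curve $C$, yielding $\int_C \hat w_2 = \int_C \hat w_3 = 0$. For the Kähler-type eigenform $\omega_1 = \omega/4$, the remaining integral $\int_C \hat w_1 = A' > 0$ is a definite positive constant. Combined with the relations $\omega = 4\omega_1$ and $\langle \omega_i, \omega_j\rangle = 2\delta_{ij}$, which give $\langle \omega, \psi\rangle_p = 8 a_1$, this yields $\int_C u = (A'/8)\langle \omega, \psi\rangle_p\, e^{-2T} + O(e^{-cT})$ with $A = A'/8 > 0$.

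\emph{Main obstacle.} The substantive content lies in the final Hodge-type step. The vanishing of $\int_C \hat w_2$ and $\int_C \hat w_3$ follows cleanly once one formalizes that the elliptic construction preserves the $(2,0)+(0,2)$ decomposition on $\hat X_2$—which rests on the Kähler identities and the fact that the $(1,1)$ piece of an SD solution to a purely $(2,0)+(0,2)$-typed source must itself be harmonic, hence zero by $b^+(\hat X_2) = 0$. The more delicate point is the strict positivity $A' > 0$; this is most naturally obtained by identifying $\hat w_1$ asymptotically with a multiple of the Fubini–Study class on $\overline{\mathbb{CP}^2}$, so that $\int_C \hat w_1$ detects the positive area of $C$, or equivalently by an integration-by-parts argument leveraging that $\omega_1$ is a positive multiple of the Kähler form on the cylindrical end.
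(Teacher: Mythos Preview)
Your overall strategy matches the paper's: apply Lemma~\ref{iteration} to get $u = u^{(1)} + u^{(2)} + O(e^{-4T})$, invoke Lemma~\ref{u2} to reduce $u^{(2)}$ on $X_2$ to its $\lambda=2$ piece $u^{(2)}_2$ modulo $O(e^{-cT})$, and then compute $\int_C u^{(2)}_2$. The divergence is in how that last computation is carried out.

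The paper does not argue abstractly via type decomposition. Instead it observes that $\psi_2 = a_1\omega/4 + a_2\operatorname{Re}\Omega + a_3\operatorname{Im}\Omega$ admits an \emph{explicit} closed self-dual extension to all of $X_2$: set $\eta = a_1\omega'/4 + a_2\operatorname{Re}\sigma^*\Omega + a_3\operatorname{Im}\sigma^*\Omega$, where $\omega'$ is the blow-up K\"ahler form and $\sigma^*\Omega$ the pulled-back holomorphic $(2,0)$-form. Since $\eta$ is closed and SD for $g'$ (hence for any conformal rescaling), $(1-\rho_1)\eta$ solves $\mathcal{D} v = -\mathcal{D}(\rho_1\psi_2)$; uniqueness of SD solutions on $\hat X_2$ (from $b_2^+(\hat X_2)=0$, as $\hat X_2\cong\overline{\mathbf{CP}}^{\,2}$) then gives $u^{(2)}_2 = e^{-2T}(1-\rho_1)\eta$ on the nose. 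Near $C$ one has $\rho_1=0$ and $\sigma^*\Omega|_C=0$, so $\int_C u = (a_1/4)\,e^{-2T}\int_C\omega' + O(e^{-cT})$, and $A = \tfrac{1}{32}\int_C\omega' > 0$ is simply the K\"ahler area of $C$.

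Your abstract route has two soft spots that this explicit extension bypasses. First, $(\hat X_2,\hat g_2)$ is \emph{not} K\"ahler---it is a conformal compactification of a cylindrical metric---so the claim that ``the elliptic construction respects the $(2,0)+(0,2)$ decomposition on a K\"ahler surface'' is not justified as stated; the $\hat g_2$-Laplacian need not preserve Hodge type. What actually rescues the conclusion is precisely that $\sigma^*\Omega$ is closed and $(2,0)$, hence SD for \emph{every} Hermitian metric---i.e.\ the explicit extension. Second, your positivity argument for $A'$ is incomplete: $\hat w_1$ is the solution of an inhomogeneous equation, not a harmonic representative of a cohomology class, and $\hat g_2$ is not Fubini--Study, so there is nothing to ``identify asymptotically with the Fubini--Study class''. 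In the paper's formulation the sign is immediate because $\omega'$ is genuinely a K\"ahler form and $C$ a complex curve.
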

\begin{proof}
Since \(\psi\) is defined on \(X_1 \subset M\), it is also defined on \(P_1\). As \(\psi\) is harmonic with respect to \(\chi g'\), we can analytically continue \(\psi\) through \(N\) to \(P_2\). On \(P_2\), \(\psi\) has the expansion:
\[
\psi = \sum_{\lambda \geq 2} \psi_{\lambda} e^{-\lambda T}.
\]
Let \(u\) be the harmonic SD form on \(X_T\) obtained by applying Lemma \ref{iteration}. Then:
\[
u = \rho_1 \psi + u^{(2)} + O(e^{-4T}).
\]
Restricting \(u\) to \(X_2\), we use Lemma \ref{u2} to get:
\[
u^{(2)} = u^{(2)}_2 + O(e^{-cT}),\quad 
\text{where } \mathcal{D}\, u^{(2)}_2 = -e^{-2T} \mathcal{D}\,(\rho_1 \psi_2).
\]
Here, \(\mathcal{D}\) is with respect to \(\chi g'\). To solve the last equation, we show that \(\psi_2\) can be extended from \(P_2\) to a closed SD form on all of \(X_2\). The pullback of \(\Omega\) by the blow-down map \(\sigma\) is a closed holomorphic form on \(X_2\), also denoted by \(\Omega\). Since \(\Omega\) is of type \((2,0)\), it is self-dual with respect to the metric \(g'\) on \(X_2\). Additionally, the K{\"a}hler form \(\omega'\) agrees with \(\omega\) on \(P_2\). Since both \(\omega'\) and \(\Omega\) are closed SD forms for \(g'\), they are also so for \(\chi g'\). Thus, the form \(\eta = 4^{-1} a_1 \omega' + a_2 \operatorname{Re} \,\Omega + a_3 \operatorname{Im} \,\Omega\) matches \(\psi_2\) on \(P_2\), and we find:
\[
u^{(2)}_2 = e^{-2T} (1 - \rho_1) \eta.
\]
In a small neighborhood of \(C\), \(\rho_1 = 0\), giving:
\[
u = e^{-2T} \eta + O(e^{-cT}),\quad 
\int_{C} u = e^{-2T} \frac{\langle \psi_2, \omega \rangle}{32} \int_{C} \omega' + O(e^{-cT}).
\]
This completes the proof. \qed
\end{proof}

\section{Proof of Theorem \ref{t:main}}\label{main_proof}
To simplify the exposition, we assume \(n = 1\) and \(b^{+}_2 = 3\), and explain how to adapt the proof for \(n > 1\). Let \((M, g)\) be a Riemannian 4-manifold containing an open subset \(V\) that is isometric to an open subset of flat \(\mathbb{R}^4\). We choose \(g\) more carefully: it can be arranged such that \(g\) is still flat in a smaller neighborhood \(V' \subset V\) and that \((M, g)\) has a harmonic SD form \(\psi_1\) with zeros in \(V'\). This result was proved by Taubes in \cite{Taub-5}, where the near-symplectic form found in Proposition 2.1 of \cite{Taub-5} has a \emph{circle} of zeros. Therefore, we assume \(M\) has a flat neighborhood \(V \ni p\) and a harmonic SD form \(\psi_1\) such that \(\psi_1(p) = 0\). For \(n > 1\), we choose points \(p_1, \ldots, p_n\) on the circle of zeros of \(\psi\) and proceed similarly.
\smallskip%

One can choose a complex structure \(J\) on \(V\) such that \(g\) is K{\"a}hler on \(V\). Let \(r\) be the distance function from \(p \in V\), and set \(\omega = -d\,d^{\mathbb{C}} r^2\). We choose \(J\) 
more carefully: let \(\psi_2\) and \(\psi_3\) be SD harmonic forms on \(M\). It can be arranged that \(\psi_2(p)\) and \(\psi_3(p)\) are orthogonal to \(\omega(p)\). This is where the condition $b_{2}^{+} \leq 3$ is used.
\smallskip%

Consider a family of Riemannian metrics \(g_{s}\) on \(M\), with \(s \in D^3\) (the unit 3-disk), such that \(g_0 = g\). Choose triples of SD harmonic 2-forms \(\{\psi_{i s}\}\), for \(i = 1,2,3\), defined on \((M, g_s)\), that depend smoothly on \(s \in D^3\) and satisfy \(\{\psi_{i s}\} = \{\psi_{i}\}\) when \(s = 0\). For each \(i = 1,2,3\), define:
\[
\pi_i(s) = \langle \omega, \psi_{i s} \rangle_{p}.
\]
By construction, \(\pi_i(0) = 0\). 

\begin{lemma}\label{brun}
There exists a family of Riemannian metrics \(g_s\) on \(M\), with \(s \in D^3\), and a small neighborhood \(U\) of \(p\) in \(X\), such that \(g_0 = g\), each \(g_s\) agrees with \(g\) on \(U\), and the mapping \((\pi_1, \pi_2, \pi_3) \colon D^3 \to \mathbb{R}^3\) constructed as above is a local diffeomorphism from a neighborhood of \(g\) in \(D^3\) onto a neighborhood of the origin in \(\mathbb{R}^3\).
\end{lemma}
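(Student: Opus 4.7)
My plan is to apply the inverse function theorem to $\pi$: it suffices to exhibit three symmetric $2$-tensors $h_1,h_2,h_3$ supported in $M\setminus U$ such that the $3\times 3$ matrix $\bigl(d\pi_i[h_j]\bigr)_{i,j}$ is non-singular; the family $g_s:=g+s_1h_1+s_2h_2+s_3h_3$, restricted to a small disc where $g_s$ remains Riemannian, then fulfils all the requirements. I fix the smoothly varying basis $\psi_{is}$ by taking $\psi_i(g')$ to be the $g'$-harmonic representative of the cup-product projection $\Pi^{+}_{g'}([\psi_i])\in\calh^{+}_{g'}$. The lemma thus reduces to surjectivity of the linear map
\[
\Psi\colon \dot g \longmapsto \bigl(\langle \omega(p),\dot\psi_i(p)\rangle\bigr)_{i=1,2,3},\qquad \supp\dot g\subset M\setminus U.
\]

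Differentiating $\psi_i(g_s)=H_{g_s}\bigl(\Pi^{+}_{g_s}([\psi_i])\bigr)$ at $s=0$ yields the splitting $\dot\psi_i = H_g(\dot c_i) + \dot H([\psi_i];\dot g)$, where $\dot c_i:=\tfrac{d}{ds}\big|_0\Pi^{+}_{g_s}([\psi_i])$ is the motion of the projected cohomology class and $\dot H([\psi_i];\dot g)$ is the variation of the harmonic representative of the \emph{fixed} class $[\psi_i]$. A standard Grassmannian calculation (writing $[\psi_i]=\Pi^{+}_{g_s}([\psi_i])+\Pi^{-}_{g_s}([\psi_i])$ and differentiating) places $\dot c_i$ inside $\calh^{-}_g$; therefore $H_g(\dot c_i)$ is ASD as a form, its value at $p$ lies in $\Lambda^{-}_p$, and it pairs to zero against $\omega(p)\in\Lambda^{+}_p$. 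Only the second summand contributes to $d\pi_i[\dot g]$, and linearity in the class yields, for any $\psi_a:=\sum_i a_i\psi_i$,
\[
\sum_i a_i\, d\pi_i[\dot g] \;=\; \langle \omega(p),\, \dot H([\psi_a];\dot g)(p)\rangle.
\]
Surjectivity of $\Psi$ is thus equivalent to the statement that for every non-zero $\psi_a\in\calh^{+}_g$ there exists $\dot g$ supported in $M\setminus U$ for which the right-hand side is non-zero.

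This final claim is the main obstacle, and I would attack it by a Green's-function/unique-continuation argument. Differentiating the harmonic condition $d^{*}_{g_s}\psi_a(g_s)=0$ at $s=0$ shows $\dot H([\psi_a];\dot g)=d\alpha$ with $\alpha$ solving the elliptic PDE $d^{*}d\,\alpha=-(\dot d^{*}[\dot g])\psi_a$; writing $\dot g\mapsto\langle\omega(p),d\alpha(p)\rangle$ in integral form via the Green's operator of $d^{*}d$, one obtains a smooth dual kernel $K_a\in\Gamma(\sym^{2}T^{*}M)$ on $M\setminus\{p\}$, built from $\psi_a$ and the Green's form based at $p$. If $\Psi$ failed to be surjective, there would exist a non-zero $a$ with $K_a\equiv 0$ on $M\setminus U$. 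But Aronszajn's unique continuation for $\cald\psi_a=0$ ensures $\psi_a$ is non-vanishing on a dense open subset of $M\setminus U$, and at any $q$ there where $\psi_a(q)\neq 0$ the Green's-form factor of $K_a(q)$ is non-degenerate (the Green's kernel of $d^{*}d$ is smooth and generically non-degenerate away from its diagonal singularity), so $K_a(q)\neq 0$, contradicting the assumed vanishing. Surjectivity of $\Psi$ is thereby established; choosing $h_1,h_2,h_3$ with $\Psi$-images spanning $\mathbb{R}^{3}$ completes the proof.
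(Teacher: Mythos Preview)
Your overall architecture---inverse function theorem, linearizing the harmonic SD representative in the metric, and dualizing via a Green's operator---is the same as the paper's. Your decomposition $\dot\psi_i = H_g(\dot c_i) + \dot H([\psi_i];\dot g)$ with the observation that $\dot c_i\in\calh^{-}_g$ contributes nothing at $p$ is a clean way to reduce to the fixed-class variation, and is equivalent to the paper's choice of $\psi_{is}$ as the SD part of the harmonic representative of a fixed class.

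The genuine gap is the last paragraph. You assert that at any $q$ with $\psi_a(q)\neq 0$ ``the Green's-form factor of $K_a(q)$ is non-degenerate,'' citing only that ``the Green's kernel of $d^{*}d$ is smooth and generically non-degenerate away from its diagonal singularity.'' This is not a proof: $K_a(q)$ is obtained from the Green's kernel $G(p,q)$ by contracting one slot against the \emph{fixed} vector $\omega(p)\in\Lambda^{+}_p$ and then applying differential operators. Even if $G(p,\cdot)$ is nowhere-degenerate as a bi-form, a specific contraction of it can certainly vanish on large sets, and you give no mechanism ruling this out on all of $M\setminus U$. Unique continuation for $\psi_a$ is irrelevant here; what you need is a non-vanishing statement for the $\omega(p)$-contracted Green's form itself.

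This is precisely the place where the paper does the real work. Writing the evaluation functional as $\delta_p(\varphi)=\int\langle Cr^{-2}\omega,\Delta\varphi\rangle$ on forms supported in the flat chart, one finds that the dual kernel is governed by $d\,d^{*}(r^{-2}\omega)$ plus a globally smooth correction $d\,d^{*}\theta$. An explicit computation gives $d\,d^{*}(r^{-2}\omega)=r^{-4}\gamma$ with $\gamma$ a \emph{constant-length ASD} $2$-form, and $\gamma\neq 0$ because $\nabla r^{-2}$ is not Hamiltonian for $\omega$. The $r^{-4}$ blow-up then forces the kernel to be non-zero on any annulus close to $p$, dominating the bounded correction $d\,d^{*}\theta$; choosing the support $W$ of $h_j$ in such an annulus (and taking $U$ smaller still) yields the non-singular Jacobian. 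Your argument would be complete once you supply this leading-singularity computation; the abstract ``generic non-degeneracy'' of the Green's kernel is not enough.
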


\begin{proof}
We closely follow the proof of Proposition 1 in \cite{LeBr}. Denote by \(\mathcal{H}^{\pm}_g\) the SD and ASD parts of the space of \(g\)-harmonic 2-forms on \(M\). The conformal class of \(g\) is uniquely determined by the Hodge star involution \(\star \colon \Lambda^2 \to \Lambda^2\). Given a volume form, a family of involutions \(\star_s\) of \(\Lambda^2\) with \(\star_0 = \star\), \(s \in D^3\), uniquely determines a family of metrics \(g_s\) on \(M\).

Let such a family \(\star_s\) be given. Denote by \(a_i \in H^2(M; \mathbb{R})\) the cohomology class of \(\psi_i\). For each \(s \in D^3\), let \(\psi_{i s}\) be the SD part of the harmonic form representing \(a_i\). Clearly, \(\psi_{i s}\) depends smoothly on \(s\). Choose local coordinates \(s_1, s_2, s_3\) on \(D^3\). For each \(j\), let \(h_j = \partial_{s_j} \star_{s} \vert_{s = 0}\). The three fields of endomorphisms \(h_j\) uniquely determine \(\partial_{s_j} \pi_i(s)\) at \(s = 0\). To prove the lemma, it suffices to show that we can choose \(h_j\) such that 
\[
\partial_{s_j} \pi_i(0) = \delta_{ij},
\]
and each \(h_j\) vanishes in a small neighborhood \(U\) of \(p\).
\smallskip%

One shows (see, e.g., \S\,2.2 in \cite{Honda}) that
\begin{equation}\label{honda}
\Delta \, \partial_{s_j} \psi_{i s} = d\,d^{*}\,h_j \, \psi_i,
\end{equation}
where \(\Delta\) and \(d^{*}\) correspond to the metric \(g\). Define the current \(\delta_{p} \colon \Omega^2 \to \mathbb{R}\) as:
\[
\delta_{p}(\varphi) = \langle \omega, \varphi \rangle \vert_{p}, \quad \text{for each } \varphi \in \Omega^2.
\]
For a general background on currents, see \cite{G-H}. Given our assumptions about \(g\), we get:
\begin{equation}\label{g0-zero}
\delta_{p}(\varphi) = 0, \quad \text{for each } \varphi \in \mathcal{H}^{+}_{g}.
\end{equation}
Let \(C\) be the constant such that
\[
\int_{V} C r^{-2} \Delta f = f(p), \quad \text{for all smooth functions } f \text{ with } \operatorname{supp}f \subset V.
\]
Thus, for any form \(\varphi\) with \(\operatorname{supp}\varphi \subset V\), we get:
\[
\int_{V} \langle C r^{-2} \omega, \Delta \varphi \rangle = \delta_{p}(\varphi).
\]
Let \(V' \subset V\) be a smaller neighborhood of \(p\), and let \(\rho\) be a smooth function that equals 1 in \(V'\) and 0 outside \(V\). Letting 
$\hat{\omega}$ be $\rho\, C r^{-2} \omega$, we define the current $T_{\hat{\omega}}$ by
\[
T_{\hat{\omega}}(\varphi) = \int_{M} \langle \hat{\omega}, \varphi \rangle\quad\text{for each $\varphi \in \Omega^2$.}
\]
$\Delta$ acts on $T_{\hat{\omega}}$ as follows:
\[
\Delta\, T_{\hat{\omega}}(\varphi) = T_{\hat{\omega}}(\Delta\, \varphi).
\]
Consider the 2-form equal to \(\Delta (C \rho r^{-2} \omega)\) on \(X - p\) and 0 at \(p\). We denote this form by \(\Delta\, \hat{\omega}\). For each \(\varphi \in \Omega^2\), we get:
\[
\Delta\, T_{\hat{\omega}}(\varphi) = \delta_p(\varphi) + \int_{M} 
\langle \Delta\, \hat{\omega}, \varphi \rangle.
\]
\eqref{g0-zero} implies that $\Delta\, \hat{\omega} \perp \calh^{+}_{g}$. 
Since $\Delta\, \hat{\omega}$ is SD, it follows that there exists 
a smooth SD 2-form $\theta$ such that 
$\Delta\, \theta = \Delta\, \hat{\omega}$. We thus get the following equality:
\[
\delta_p(\varphi) = T_{\hat{\omega}}(\Delta\,\varphi) - 
\int_{M} \langle \theta, \Delta\,\varphi \rangle \quad\text{for each $\varphi \in \Omega^2$}.
\]
To simplify notation, in what follows 
we set $C = 1$ and write $\partial_{j}$ instead of $\partial_{s_j}$. Substituting 
$\partial_{j} \psi_{i s}$ into $\varphi$ and using \eqref{honda}, we get:
\[
\delta_p(\partial_{j} \psi_{i s}) = \partial_{j} \pi_i(0) = 
T_{\hat{\omega}}(d\,d^{*}\,h_j\,\psi_i) - 
\int_{M} \langle \theta, d\,d^{*}\,h_j\,\psi_i \rangle = 
\int_{M} \langle \rho\, r^{-2} \omega, d\,d^{*}\,h_j\,\psi_i \rangle - 
\int_{M} \langle \theta, d\,d^{*}\,h_j\,\psi_i \rangle.
\]
If we assume that 
\[
\operatorname{supp} h_j \cap V \subset V'
\]
and that \(h_j\) vanishes near \(p\), we can rewrite the last expression as:
\begin{equation}\label{Theta-1}
\partial_{j} \pi_i(0) = 
\int_{V} \langle d\,d^{*}\, r^{-2} \omega, h_j\,\psi_i \rangle - 
\int_{M} \langle d\,d^{*}\,\theta, h_j\,\psi_i \rangle.
\end{equation}
The first term on the right-hand side is local, while the second is global. We will arrange for the local term to dominate, allowing us to extend the argument to the case \(n > 1\). 
The 2-form \(d\,d^{*}\, r^{-2} \omega\) is $\UU(2)$-invariant. Moreover, for any \(\mu \leq 1\), the pullback of \(d\,d^{*}\, r^{-2} \omega\) by the scaling map \(x \to \mu \cdot x\) equals \(\mu^{-2} d\,d^{*} \, r^{-2} \omega\). Hence, \(d\,d^{*} \, r^{-2} \omega = r^{-4} \gamma\), where the 2-form \(\gamma\) has constant length and is ASD. Let \(\hat{V} \subset V'\) be the subset of \(V\) where the three forms \(\psi_i\) are linearly independent; this is an open, dense subset of \(V'\). On \(\hat{V} \subset V'\), define three fields of endomorphisms \(h^{'}_{j}\) such that \(h^{'}_{j} \psi_i = \gamma \delta_{ij}\). Let \(W \subset \hat{V}\) be an open subset whose closure is compact and contained in \(\hat{V}\). Let \(\chi\) be a non-negative function on \(\hat{V}\) that is supported in \(W\) and positive somewhere. Setting \(h_j = \chi h^{'}_{j}\), we rewrite \eqref{Theta-1} as:
\begin{equation}\label{Theta-2}
\partial_{j} \pi_i(0) = 
\delta_{ij} \int_{W} \chi \langle r^{-4} \gamma, \gamma \rangle - 
\delta_{ij} \int_{M} \chi \langle d\,d^{*}\,\theta, \gamma \rangle.
\end{equation}
Since \(\theta\) is a smooth form on \(M\) with uniformly bounded length, the function \(\langle r^{-4} \gamma - d\,d^{*}\,\theta, \gamma \rangle\) is positive for all small enough \(r\), provided \(\gamma \neq 0\). By choosing \(W\) sufficiently close to the origin, we can arrange for \(\partial_{j} \pi_i(0) = \delta_{ij}\) and ensure that \(W\) does not intersect a small neighborhood \(U\) of \(p\). To complete the proof, it remains to show that \(\gamma\) is non-zero. We compute:
\[
d\,d^{*}\, r^{-2} \omega = - L_{\nabla r^{-2}}\, \omega,
\]
where \(\nabla r^{-2}\) is the gradient vector field of \(r^{-2}\). However, this vector field is not Hamiltonian, and the lemma follows. \qed
\end{proof}
\smallskip%

Let \((M, g)\) be as above, with \(U\) and \(g_s\) as given by Lemma \ref{brun}. We blow up \(M\) at the point \(p\) to obtain \((X, g'_{s})\). Then, construct \(X_{T}\) by applying the neck-stretching deformation to \((X, g'_{s})\) as described in \S\,\ref{blowup}. Next, apply Lemma \ref{blowup-lemma}: associated to \(\psi_{1s}, \psi_{2s}, \psi_{3s}\) on \((M, g_s)\), there are SD harmonic forms \(u_{1 s}, u_{2 s}, u_{3 s}\) on \(X_{T}\) that depend smoothly on \(s\) and \(T\). By Lemma \ref{blowup-lemma}, the forms \(u_{i s}\) are \(C^0\)-close to \(\psi_{i s}\) where both are defined. In particular, if \(\psi_{i s}\) are linearly independent on \(M\), then \(u_{i s}\) are linearly independent on \(X_{T}\) for sufficiently large \(T\). From \eqref{blowup-area}, we get:
\begin{equation}\label{ui}
\int_{C} u_{i s} = A \cdot \pi_i(s)\, e^{-2T} + O(e^{-cT}), \quad \text{for } c > 2.
\end{equation}
For large enough \(T\), the last term is negligible, and thus \eqref{ui} provides a surjective map from a small neighborhood of \(g\) in \(D^3\) onto a small neighborhood of \(0\) in \(\mathbb{R}^3\). This completes the proof.

\smallskip
\section*{Acknowledgements}  VS is supported by National Science Centre, Poland, project number: 2019/35/B/ST1/03573. GS is supported by an SNSF Ambizione fellowship.

\smallskip
\bibliographystyle{plain}
\bibliography{references}

\end{document}